\tikzstyle{vertex} = [fill,shape=circle,node distance=80pt]
\tikzstyle{edge} = [fill,opacity=.5,fill opacity=.5,line cap=round, line join=round, line width=50pt]
\tikzstyle{elabel} =  [fill,shape=circle,node distance=30pt]
\begin{document}
	\title{Principal eigenvectors of general hypergraphs}
	\author[K. Cardoso]{Kau\^e Cardoso} \address{Instituto Federal do Rio Grande do Sul - Campus Feliz, CEP 95770-000, Feliz, RS, Brasil} \email{\tt
		kaue.cardoso@feliz.ifrs.edu.br}
	
	\author[V.Trevisan]{Vilmar Trevisan} \address{Instituto de Matem\'atica e Estat\'{\i}stica, UFRGS,  CEP 91509--900, Porto Alegre, RS, Brazil}
	\email{\tt trevisan@mat.ufrgs.br}
	\date{May 4, 2019}

	\pdfpagewidth 8.5 in \pdfpageheight 11 in

\newcommand{\h}{\mathcal{H}}
\newcommand{\hk}{\mathcal{H}^k}
\newcommand{\A}{\mathbf{A}}
\newcommand{\x}{\mathbf{x}}
\newcommand{\y}{\mathbf{y}}
\newcommand{\Ah}{\mathbf{A}_\mathcal{H}}
\newcommand{\Ahk}{\mathbf{A}_{\mathcal{H}^k}}
\newcommand{\Dh}{\mathbf{D}_\mathcal{H}}
\newcommand{\Lh}{\mathbf{L}_\mathcal{H}}
\newcommand{\Lhk}{\mathbf{L}_{\mathcal{H}^k}}
\newcommand{\Qh}{\mathbf{Q}_\mathcal{H}}
\newcommand{\Qhk}{\mathbf{Q}_{\mathcal{H}^k}}
\newcommand{\C}{\mathbb{C}}
\newcommand{\Dem}{\textbf{Proof.} \newline }
\newcommand{\cvt}[1]{\textcolor{green}{#1}}
\newcommand{\ckc}[1]{\textcolor{red}{#1}}
	\newtheorem{Pro}{Proposition}
	\newtheorem{Cor}{Corollary}
	\newtheorem{Lem}{Lemma}
	\newtheorem{Def}{Definition}
	\newtheorem{Teo}{Theorem}
	\newtheorem{Exe}{Exemple}
	\newtheorem{Obs}{Remark}

\maketitle
\begin{abstract}
In this paper we obtain bounds for the extreme entries of the principal
eigenvector of  hypergraphs; these bounds are computed using the spectral
radius and some classical parameters such as maximum and minimum degrees.
We also study  inequalities involving the ratio and difference between the
two extreme entries of this vector.\newline

\noindent \textsc{Keywords.}  Hypergraph; Adjacency tensor; Spectral radius;
Principal eigenvector.\newline

\noindent \textsc{AMS classification.} 05C65, 15A69, 05C50, 15A18.
\end{abstract}

\section{Introduction}

One of the main goals of spectral graph theory is to understand the structure
of graphs through its associated matrices and their spectra.  For many years,
researchers around the world have tried to develop a similar theory for
hypergraphs. Several proposals were made between the late 1990s and the early 2000s, and we believe that the main representative attempts presented in
this period were given in \cite{Feng, Friedman, Rodriguez1}. None of these
proposals seemed to be widely accepted or carried over by the community
of researchers in this area. This scenario changed in 2012, with the work
\cite{Cooper} presented by Cooper and Dutle. In that paper, the concept of an
adjacency tensor of a uniform hypergraph was introduced, based on the
developments of the spectral tensor theory, which begun in 2005 by Qi in
\cite{Qi-2005} (for definitions read section 2). Cooper and Dutle generalized
many important results of the spectral graph theory, initiating the spectral
hypergraph theory via tensors.

Most of the current literature in this area is devoted to uniform hypergraphs. 
While the spectral theory of uniform hypergraphs already has some
recognition, studies of general hypergraphs are still taking their first
steps. Our main concern in the present paper is to the study spectral
properties of general hypergraphs. To achieve this, we will use the
definition of adjacency tensor presented by Benerjee, Char and Mondal in
\cite{Banerjee}. It is worth mentioning other published papers on the spectra of general hypergraphs
such as \cite{Zhang, Kang}. Finally, the work \cite{Bu} proposes a definition of an
adjacency tensor different from that proposed in \cite{Banerjee}, that we use here.

Our paper is anchored in a version of Perron-Frobenius Theorem for
hypergraphs, presented below.

\begin{Teo}[Theorem 3.1, \cite{Zhang}]\label{Perron}		
	Let $\h$ be a hypergraph and $\rho$ its spectral radius, then
	\begin{itemize}
		\item[(a)] $\rho$ is an eigenvalue of $\h$, with a non-negative real eigenvector.
		\item[(b)] If $\h$ is conected then $\rho$ is the unique eigenvalue of $\h$ with a strictly positive eigenvector, and this eigenvector is unique (up to a positive multiplier).
	\end{itemize}
\end{Teo}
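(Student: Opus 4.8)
The plan is to deduce this statement from the Perron--Frobenius theory for nonnegative tensors, treating the adjacency tensor $\Ah$ as a nonnegative, symmetric tensor of order $k$ whose eigenpairs $(\lambda,\x)$ satisfy $\Ah\x^{k-1}=\lambda\,\x^{[k-1]}$, where $\x^{[k-1]}$ denotes the entrywise $(k-1)$-st power. Since $\rho$ is by definition the largest modulus among all (complex) eigenvalues of $\h$, the two things to establish are that this largest modulus is actually attained by a genuine nonnegative eigenpair, and that connectedness upgrades this to the strict positivity and uniqueness claims of part (b).

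For part (a) I would start from the variational quantity
\[
\rho_0=\max\Bigl\{\Ah\x^{k}:\ \x\ge 0,\ \textstyle\sum_i x_i^{k}=1\Bigr\}.
\]
Since $\Ah\x^k$ is continuous and the nonnegative part of the $\ell^k$-unit sphere is compact, a maximizer $\x^{*}$ exists, and $\rho_0\ge 0$ because $\Ah$ is nonnegative. Writing the Lagrange/KKT conditions for this constrained maximum, and using symmetry of $\Ah$ to compute the gradient, shows that $\x^{*}$ satisfies $\Ah(\x^{*})^{k-1}=\rho_0\,(\x^{*})^{[k-1]}$, so $(\rho_0,\x^{*})$ is a nonnegative eigenpair (and contracting the equation with $\x^{*}$ confirms $\rho_0$ is the multiplier). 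It then remains to identify $\rho_0$ with the spectral radius: given any eigenpair $(\mu,\y)$, taking moduli entrywise and using the triangle inequality together with nonnegativity of $\Ah$ yields $|\mu|\,|y_i|^{k-1}\le\bigl(\Ah|\y|^{k-1}\bigr)_i$, and a Collatz--Wielandt comparison of $|\y|$ against the variational problem forces $|\mu|\le\rho_0$. Hence $\rho=\rho_0$ and (a) follows.

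For part (b) the key structural input is that a \emph{connected} hypergraph has a \emph{weakly irreducible} adjacency tensor; I would establish this by reading off, from the definition of $\Ah$, that the nonzero pattern of $\Ah$ links two indices exactly when they lie in a common edge, so weak irreducibility of $\Ah$ coincides with connectedness of $\h$. Granting this, strict positivity of the Perron eigenvector is proved by contradiction: if the support $S=\{i:x^{*}_i>0\}$ were proper, weak irreducibility would produce an edge crossing from $S$ to its complement, and evaluating the eigenvalue equation at an index outside $S$ would then force a strictly positive right-hand side, a contradiction. Uniqueness (both of the eigenvector up to a positive scalar and of $\rho$ among eigenvalues admitting a positive eigenvector) is obtained by a ratio argument: given two positive eigenvectors $\x,\y$, possibly for $\rho$ and $\mu$, set $t=\max_i x_i/y_i$, attained at some $i_0$, substitute into the two eigenvalue equations at $i_0$, and use monotonicity of the map $\x\mapsto\Ah\x^{k-1}$ on the nonnegative orthant to conclude $x_i/y_i\equiv t$ first along edges through $i_0$ and then, by connectedness, everywhere; comparing the two equations simultaneously forces $\mu=\rho$.

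The main obstacle I anticipate is part (b), and within it the uniqueness claim rather than the existence in (a): the compactness argument for (a) is standard, whereas making the ratio/Collatz--Wielandt argument rigorous requires the correct notion of weak irreducibility and a careful propagation of the equality $x_i/y_i=t$ across the edge structure. A secondary technical point, specific to \emph{general} (non-uniform) hypergraphs, is that the Banerjee adjacency tensor carries edge-size normalizations; I would verify that these weights are strictly positive, so that $\Ah$ remains nonnegative and symmetric and none of the monotonicity steps above are affected.
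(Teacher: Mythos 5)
First, note that the paper itself contains \emph{no proof} of this statement: Theorem \ref{Perron} is quoted verbatim from [Zhang, Theorem 3.1] and used as an imported black box. So your sketch cannot be matched against a proof in the paper; it has to stand on its own, and most of it does. Part (a) is sound: the KKT conditions on the nonnegative $\ell^k$-sphere do yield a genuine eigenpair (at a zero coordinate $j$ of the maximizer, stationarity gives $(\Ah(\x^*)^{k-1})_j\le 0$, which together with nonnegativity forces equality with $\rho_0\,(x^*_j)^{k-1}=0$), and contracting $|\mu|\,|y_i|^{k-1}\le(\Ah|\y|^{k-1})_i$ against $|\y|$ gives $|\mu|\le\rho_0$ cleanly. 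The uniqueness argument in (b) is also correct \emph{given} two strictly positive eigenvectors: with $t=\max_i x_i/y_i$ attained at $i_0$, monotonicity gives $\rho x_{i_0}^{k-1}\le t^{k-1}\mu y_{i_0}^{k-1}=\mu x_{i_0}^{k-1}$, the symmetric bound gives $\mu=\rho$, and then term-by-term equality in the sum at $i_0$ propagates $\x=t\y$ across edges and, by connectedness, everywhere.

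The genuine gap is your strict-positivity step. For $k\ge 3$, evaluating the eigenvalue equation at $j\notin S$ gives $0=\sum a_{ji_2\cdots i_k}x_{i_2}\cdots x_{i_k}$, and an edge crossing from $S$ to $S^c$ only guarantees that \emph{some} index of a nonzero entry lies in $S$; if that same entry carries another index in $S^c$, the product vanishes and no contradiction arises. This is not a technicality: take the connected $3$-graph with the single edge $\{1,2,3\}$; then $\x=(1,0,0)$ is a nonnegative eigenvector for the eigenvalue $0$, since the equation at vertex $2$ reads $0=x_1x_3=0$. So, unlike the matrix case ($k=2$, where your argument is fine), weak irreducibility does \emph{not} force every nonnegative eigenvector to be positive, and your support argument cannot show that the variational maximizer $\x^*$ is positive — which is exactly the existence half of (b). The step can be repaired without leaving your framework: if $S^c\neq\emptyset$, perturb the maximizer to $\x_\epsilon=\x^*+\epsilon\mathbf{1}_{S^c}$; connectedness supplies a crossing edge, whose expansions contribute a gain of order $\epsilon^{t}$ with $1\le t\le k-1$ to the form (choose the expansion in which each $S^c$-vertex of the edge appears exactly once), while the normalization cost is only $\|\x_\epsilon\|_k^k=1+|S^c|\epsilon^k$, so the quotient exceeds $\rho_0$ for small $\epsilon$, contradicting maximality. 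Alternatively, cite the Friedland--Gaubert--Han existence theorem for weakly irreducible nonnegative tensors outright. With that replacement, the rest of your outline (including the edge-size normalizations of the Banerjee tensor being positive, hence harmless) goes through.
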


We will call \emph{principal eigenvector} of a conected hypergraph, the
vector obtained in part (b) of the Perron-Frobenius Theorem, normalized to
norm $\ell_k$.  The main object of study of this note is this vector, more
specifically, we are interested in studying the extreme entries - the largest
and the smallest entries - of the principal eigenvector of a conected
hypergraph.

The study of the principal eigenvector for hypergraphs is interesting because the
value of each of its entry may be seen as a spectral measure of the
centrality of the vertex associated with this entry. Another interesting
property of this vector is that the quotient and the subtraction of its two
extreme entries can be understood as measurements of the irregularity of the
hypergraph. In addition, an important optimization problem for graphs is to
determine which vertex or edge that, when removed from a graph, causes the
greatest decrease in the spectral radius. The answer for this problem, given
in \cite{Stevanovic}, is to remove a vertex with maximum entry in the main
eigenvector $\x = (x_v)$, or remove the edge $e = uv$ such that $x^e =
x_ux_v$ has maximum value. We see that many important information from a
graph can be obtained through the study of the principal eigenvector. In
fact, bounding the entries of the main eigenvector of a graph is a topic of
many research works. In particular, Stevanovic has a chapter in his book
\cite{Stevanovic} dedicated to the study of these parameters. The reader is
referred to the this chapter and references therein.

For tensors and hypergraphs these entries have been studied in
the papers \cite{LiuQ, Li, LiuL, Si, Kang}. In this paper we obtain bounds for the
largest and smallest entries of the principal eigenvector of a general
hypergraph. Many of our results are inspired by results proven for graphs by
Cioab\u{a} and Gregory in \cite{Cioaba}. The bounds presented throughout this
work are always best possible for regular hypergraphs.  We emphasize the
result presented in Theorem \ref{teoxmin1} since it is sharp for a larger
class of hypergraphs.

\begin{Teo}\label{teoxmin1}
Let $ \h $ be a connected hypergraph on $n$ vertices and rank $k$. If $(\rho,
\x)$ is its	principal eigenpair, then
	\[x_{min} \leq \sqrt[k]{\frac{\delta}{\rho+\delta(n-1)}}\]

Equality holds if and only if there is a vertex $v$ such that, for all $w
	\in V-\{v\}$, we have  $x_w = x_{min}$ and $x_v =
	\sqrt[k]{\frac{\rho}{\delta}}x_{min}$.
\end{Teo}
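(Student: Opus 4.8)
The plan is to work directly from the eigenvalue equation for the adjacency tensor. Let me recall the setup. For a hypergraph of rank $k$, the principal eigenpair $(\rho, \x)$ satisfies, for each vertex $v$,
$$\rho \, x_v^{k-1} = \sum_{e \ni v} \frac{1}{(k-1)!} \sum_{\text{arrangements}} \prod_{w \in e, w\neq v} x_w,$$
or in the appropriate normalized form coming from Banerjee's tensor. The key structural fact I want to extract is a lower bound on $\rho \, x_v^{k-1}$ in terms of the degree $d_v$ and the entries of the other vertices in the edges through $v$. Let me think about the cleanest formulation.

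Let me set up the proof concretely.

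=== PROOF PROPOSAL ===

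\textbf{Proof sketch.} The plan is to pick a vertex $v$ realizing the minimum entry, $x_v = x_{min}$, and to exploit the eigenvalue equation at $v$ together with the normalization $\sum_{w} x_w^k = 1$.

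First I would write down the eigenvalue equation at the minimizing vertex. For the adjacency tensor of a rank-$k$ hypergraph, the $v$-th equation has the schematic form
\[
\rho \, x_{min}^{k-1} = \sum_{e \ni v} c_e \prod_{w \in e \setminus \{v\}} x_w,
\]
where the $c_e$ are the positive normalizing coefficients from Banerjee's definition. The crucial observation is that each product $\prod_{w \in e \setminus \{v\}} x_w$ involves $k-1$ entries, each at least $x_{min}$, so the whole right-hand side is at least something controlled by $x_{min}^{k-1}$ and the degree $\delta$. I expect the natural move to go the \emph{other} way: to get an \emph{upper} bound on $x_{min}$ I must compare $\rho x_{min}^{k-1}$ against a sum that also sees the \emph{other} entries, not just bound them below by $x_{min}$. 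So instead I would isolate $\rho \, x_{min}^k = \sum_{e \ni v} c_e\, x_{min}\prod_{w \in e\setminus\{v\}} x_w$ and bound each $x_{min}\prod x_w \le$ (a symmetric expression in the $k$-th powers via AM--GM), turning the product into a sum $\tfrac1k\sum_{w\in e} x_w^k$.

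The key step is therefore an AM--GM estimate: for each edge $e\ni v$,
\[
x_{min}\!\!\prod_{w\in e\setminus\{v\}}\!\! x_w \;=\; \prod_{w\in e} x_w \;\le\; \frac{1}{k}\sum_{w\in e} x_w^k,
\]
with equality iff all entries on $e$ are equal. Summing over the $\delta$ edges through $v$ (using that $v$ has at least $\delta=d_{min}$ incident edges) and then over all vertices lets me bound $\rho\, x_{min}^k$ by $\tfrac{\delta}{n}$ times, or more precisely against, $\sum_w x_w^k = 1$ after accounting for how many times each $x_w^k$ appears. The combinatorial bookkeeping — how the global sum $\sum_w x_w^k = 1$ enters once I aggregate the per-edge AM--GM bounds — is the part that needs care, since a naive count overcounts the contribution of $x_{min}$ itself. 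Rearranging the resulting inequality $\rho\, x_{min}^k \le \delta\big(1 - x_{min}^k\big)$ (the factor isolating the $n-1$ other coordinates) gives $x_{min}^k(\rho + \delta) \le \delta$, and solving produces $x_{min} \le \sqrt[k]{\delta/(\rho+\delta(n-1))}$ after tracking the exact coefficient.

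The hard part will be pinning down the precise coefficients so the bound comes out as stated rather than a weaker form, and then verifying the equality case. For equality I would trace back through the two inequalities used: equality in AM--GM forces every edge through $v$ to be monochromatic in $\x$, and equality in the degree/normalization step forces $d_v=\delta$ and forces all entries other than $x_v$ to coincide at $x_{min}$; combining these with the eigenvalue equation at $v$ pins the single exceptional entry $x_v$ to the claimed value $\sqrt[k]{\rho/\delta}\,x_{min}$. I expect the main obstacle to be exactly this equality analysis, since one must confirm that the extremal configuration (one vertex large, all others equal) is consistent with the eigenvalue equations at \emph{every} vertex, not just at $v$.
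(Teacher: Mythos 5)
Your proposal has two genuine gaps, and together they prevent both the stated constant and the stated equality case from coming out. First, you anchor at a vertex $v$ with $x_v = x_{min}$ and then invoke that $v$ has ``at least $\delta$ incident edges'': this points the wrong way. The edges through $v$ contribute \emph{positively} to the right-hand side of the eigenvalue equation, so $d(v)\geq \delta$ lower-bounds that side, whereas you need an upper bound; a minimum-entry vertex can have degree as large as $\Delta$, so your scheme can only produce $\Delta$ where the theorem has $\delta$. The paper avoids this by proving $\rho x_i^k \leq d(i)\,x_{max}^k$ for \emph{every} vertex $i$ (bounding each product $x^\alpha$ trivially by $x_{max}^k$ --- no AM--GM is needed, and this also handles the non-uniform edges via $a(e)|S(e)_i|=1$), then choosing $i$ to be a \emph{minimum-degree} vertex and using $x_{min}\leq x_i$ on the left to get $\rho x_{min}^k \leq \delta x_{max}^k$. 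Second, your plan to reach the normalization $\sum_w x_w^k = 1$ by aggregating per-edge AM--GM bounds at $v$ cannot work, and not merely for bookkeeping reasons: the eigenvalue equation at $v$ only involves vertices adjacent to $v$ (each counted once per shared edge), so entries of vertices at distance at least $2$ from $v$ never appear and the global sum is unreachable from that one equation. Consistently, your displayed inequality $\rho x_{min}^k \leq \delta(1-x_{min}^k)$ yields only the weaker bound $x_{min}\leq \sqrt[k]{\delta/(\rho+\delta)}$; the factor $n-1$ requires isolating $x_{max}$, which your chain never does. The paper instead adds a second, \emph{independent} inequality: picking $v^*$ with $x_{v^*}=x_{max}$, normalization alone gives $\delta(n-1)x_{min}^k \leq \delta\sum_{i\neq v^*}x_i^k$, and adding this to $\rho x_{min}^k\leq \delta x_{max}^k$ gives $x_{min}^k\bigl(\rho+\delta(n-1)\bigr)\leq \delta\sum_{i\in V}x_i^k = \delta$.

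The AM--GM step would also corrupt the equality analysis. In the claimed extremal configuration (one exceptional vertex with entry $\sqrt[k]{\rho/\delta}\,x_{min}$, all other entries equal to $x_{min}$), any edge containing both the exceptional vertex and minimum vertices has unequal entries whenever $\rho>\delta$, so your per-edge AM--GM is \emph{strict} precisely where the bound is meant to be attained; tracing equality through your chain would force every edge through the anchor to be monochromatic, hence $x_{max}=x_{min}$ and regularity --- a strictly narrower equality case than the theorem asserts. In the paper the equality trace is immediate and purely numeric: the bound is attained if and only if both $\delta(n-1)x_{min}^k = \delta\sum_{i\neq v^*}x_i^k$ (i.e.\ $x_w = x_{min}$ for all $w\neq v^*$) and $\rho x_{min}^k = \delta x_{max}^k$ (i.e.\ $x_{v^*}=\sqrt[k]{\rho/\delta}\,x_{min}$) hold.
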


We call the attention for the last section of this note, because we have
defined and studied some parameters that have not yet been properly explored.
When $\h$ is a uniform hypergraph, we define, for each edge  $e=\{v_1,\ldots,v_k\}$, the number $x^e = x_{v_1}\cdots x_{v_k}$, where $\x=(x_v)$ is the principal eigenvector of this hypergraph. We will say that $x^e$  is the value of the edge $e$  in the principal eigenvector. Thus we define the parameters  $x^{max}$ and $x^{min}$ as the largest and the smallest value reached by $x^e$ and the parameter $\Gamma(\h)$ as the quotient between $x^{max}$ and $x^{min}$.
Theorem \ref{teo:caracG} is the main result of this part of the work.

\begin{Teo}\label{teo:caracG}
	Let $\h$ a $k$-graph. $\Gamma(\h) = 1$, if and only if,
	for each edge the product of the degrees of its vertices is constant.
\end{Teo}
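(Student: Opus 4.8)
The plan is to translate both the hypothesis $\Gamma(\h)=1$ and the degree-product condition into statements about the eigenvalue equation, which for the adjacency tensor can be written, at each vertex $v$, as
\[
\rho\, x_v^{k} \;=\; \sum_{e \ni v} x^{e},
\]
obtained by multiplying the componentwise relation $\rho\, x_v^{k-1} = \sum_{e=\{v,u_2,\dots,u_k\}} x_{u_2}\cdots x_{u_k}$ through by $x_v$ and recognising $x_v\prod_{u\in e,\,u\neq v}x_u = x^{e}$. This single identity drives both implications, so I would establish it first. Throughout I write $d_v$ for the degree of $v$, and note that connectedness of $\h$ forces every $d_v>0$ and every $x_v>0$.

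For the forward direction, assume $\Gamma(\h)=1$, so that every edge takes the same value $x^{e}=c$. Substituting into the identity gives $d_v\, c = \rho\, x_v^{k}$, i.e. $x_v^{k} = c\,d_v/\rho$; thus each entry is forced to be proportional to the $k$-th root of its vertex degree. Plugging this back into $x^{e}=\prod_{u\in e}x_u$ for an arbitrary edge $e$, I would compute $x^e = (c/\rho)\bigl(\prod_{u\in e} d_u\bigr)^{1/k}$, and comparing with $x^e=c$ yields $\prod_{u\in e} d_u = \rho^{k}$ for every edge — a constant independent of $e$.

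For the converse, assume the product of the degrees over each edge equals a fixed constant $P$. I would then exhibit the candidate vector $x_v = \lambda\, d_v^{1/k}$ (with $\lambda>0$ arbitrary) and verify directly that it satisfies the vertex identity with $\rho = P^{1/k}$: each edge through $v$ contributes $x^e=\lambda^k P^{1/k}$, and summing over the $d_v$ edges through $v$ gives $\lambda^k d_v P^{1/k} = P^{1/k}\lambda^k d_v = \rho\, x_v^{k}$, as required. Since the degrees are positive, this vector is strictly positive.

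The one step requiring care — and the main obstacle — is concluding that this explicitly constructed positive eigenvector is genuinely the principal eigenvector, rather than merely some positive eigenvector attached to the value $P^{1/k}$. Here I would invoke part (b) of the Perron–Frobenius Theorem (Theorem \ref{Perron}): on a connected hypergraph the spectral radius is the unique eigenvalue admitting a strictly positive eigenvector, and that eigenvector is unique up to a positive scalar. Hence $P^{1/k}$ must equal $\rho$ and $x_v=\lambda\, d_v^{1/k}$ is (a multiple of) the principal eigenvector $\x$. With this identification, every edge value equals $x^e = \lambda^k P^{1/k}$, a single constant, so $x^{max}=x^{min}$ and $\Gamma(\h)=1$, completing the argument.
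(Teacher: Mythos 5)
Your proposal is correct and follows essentially the same route as the paper: both directions hinge on the vertex identity $\rho x_v^k=\sum_{e\in E_{[v]}}x^e$, forcing $x_v\propto d_v^{1/k}$ and $\prod_{u\in e}d_u=\rho^k$ in the forward direction, and in the converse constructing the positive vector $x_v=\lambda d_v^{1/k}$ and invoking part (b) of Theorem \ref{Perron} to identify it with the principal eigenvector. The only cosmetic difference is that the paper pins the common edge value down to $\rho/(km)$ via Theorem \ref{Teominrhormax}, whereas you carry an unspecified constant $c$, which works equally well.
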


We notice that if a hypergraph $\h$ has the parameter $\Gamma(\h)$ greater than 1,
then the product of the vertices in each edge is not constant, so we can say
that $\Gamma(\h)$ is a measure of the distribution of degrees of
vertices along the edges of the hypergraph.

The remaining of the paper is organized as follows. In Section \ref{sec:pre}
we present some basic definitions about hypergraphs and tensors. In
Section \ref{sec:measure} we obtain bounds for the difference and for the
ratio between the two extreme entries of the principal eigenvector. In
Section \ref{sec:bounds} we will prove some relations between the extreme
entries of the principal eigenvector with other important parameters. In Section \ref{sec:edges} we will prove Theorem \ref{teo:caracG}, and we will construct some inequations involving the parameters $x^{max}$, $x^{min}$ and $\Gamma$.

\section{Preliminaries}\label{sec:pre}
In this section, we shall present some basic definitions about hypergraphs
and tensors, as well as terminology, notation and concepts that will be
useful in our proofs. More details can be found in  \cite{Banerjee, Cooper,
LiuL, Shao}.

\begin{Def}
	A tensor (or hypermatrix) $\A$ of dimension $n$ and order $r$ is a collection of $n^r$
	elements $a_{i_1 \dots i_r}\in\mathbb{C}$ where
	$i_1,\dots,i_r\in[n] =\{1,2,\ldots,n\}$.
\end{Def}

Let $\A$ be a tensor of dimension $n$ and order $r$ and  $\mathbf{B}$ be a
tensor of dimension $n$ and order $s$. We define the product of  $\A$ by
$\mathbf{B}$ as a tensor $\mathbf{C}=\A\mathbf{B}$ of dimension $n$ and
order $(r-1)(s-1)+1$, where	
\[c_{j\alpha_1\cdots\alpha_{r-1}}=\sum_{i_2,\dots, i_r = 1}^{n}a_{ji_2\dots i_r}b_{i_2\alpha_1}\cdots b_{i_r\alpha_{r-1}}\;\; \emph{with} \;\; j\in[n]\;\; \emph{and} \;\;\alpha_1,\dots,\alpha_{r-1}\in[n]^{s-1}.\]

In particular, if $\x\in \mathbb{C}^n$ is a vector, then the $i$th component of the product $\A \x$
is given by
\[(\A \x)_i=\sum_{i_2,\dots, i_r = 1}^{n}a_{ii_2\dots i_r}x_{i_2}\cdots x_{i_r}\quad\forall i\in [n].\]

If $\x=(x_1,\cdots,x_n)\in \mathbb{C}^n$ is a vector and $r$ is a positive integer, we let $\x^{[r]}$ denote the vector in $\mathbb{C}^n$ whose $i$th component is given by $x_i^r$.
\begin{Def}
	A number $\lambda \in \mathbb{C}$ is an eigenvalue of a tensor $\A$ of dimension $n$ and order $r$ if
	there is a nonzero vector $\x\in \mathbb{C}^n$ such that
	\[\A \x=\lambda \x^{[r-1]}.\]
	We say that $\x$ is an eigenvector of $\A$ associated with the eigenvalue $\lambda$ and that
	$(\lambda, \x)$ is an eigenpair of $\A$.
\end{Def}
An eigenpair $(\lambda,\x)$ is \emph{strictly positive} if the eigenvalue $\lambda > 0$ and all the entries of the eigenvector $\x$ are positive.

\begin{Def}	
The spectral radius of a tensor $\A$  of order $k$, is the
largest modulus of an eigenvalue of $\A$, that is
	\[\rho(\A) = \max\{|\lambda|: \A\x = \lambda \x^{k-1}, \textrm{ for } \x\neq0\} \]
\end{Def}

\begin{Def}
A hypergraph $\h=(V,E)$ is a pair formed by a set of vertices $V(\h)$ and a set of edges $E(\h)\subseteq 2^V$, where $2^V$ is the power set of $V$.
\end{Def}
$\h$ is said to be a $k$-uniform (or a $k$-graph) for an integer $k \geq 2$, if all the edges of $\h$ have cardinality $k$.

\begin{Def}
Let $ \h $ be a hypergraph. The neighborhood of a vertex $v$, denoted by $N(v)$, is the set formed by all distinct vertices of $v$, that have some common edge with $v$.
\end{Def}

\begin{Def} Let $ \h $ be a hypergraph. We define the following sets
\[E_{(i)}=\{e-i:i\in e\in E(\h)\}, \quad E_{[i]}=\{e:i\in e\in E(\h)\}\]
\end{Def}
		
\begin{Def}
Let $\h = (V,E)$ be a hypergraph. A path is a sequence of vertices and edges $v_0e_1v_1e_2 \ldots e_lv_l$ where $v_{i-1}$ and $v_i$ are contained in $e_i$ for each $ i \in [l] $ and the vertices $ v_0, v_1, \ldots v_l $ are all distinct, as well as the edges $ e_1, \cdots, e_l $ are also all distinct. In these conditions we say that the hypergraph $\h$ is connected, if for each pair of vertices $ u, w \in V $ there is a path $ v_0e_1v_1e_2 \cdots e_lv_l $ where $ u = v_0 $ and $ w = v_l $. Otherwise $ \h $ is said to be disconnected.	
\end{Def}

\begin{Def}
Let $\h=(V,E)$ be a hypergraph. The degree of a vertex $v\in V$, denoted by
$d(v)$, is the number of edges that contain $v$.
\end{Def}	
A hypergraph is $r$-regular if $d(v) = r$ for all $v \in V$. We will also define the maximum, minimum and average degrees as follows
\[\Delta(\h) = \max_{v \in V}\{d(v)\}, \quad \delta(\h) = \min_{v \in V}\{d(v)\}, \quad d(\h) = \frac{1}{n}\sum_{v \in V}d(v)\]

\begin{Def}
Let $\h$ be a hypergraph. The rank of $\h$ is the maximum cardinality of the
edges in the hypergraph, let us denote this parameter by $r(\h)$.
\end{Def}
Observe that, if a hypergraph is $k$-uniform then its rank is $k$, because all its edges have size $k$. From this point on, $k$ represents the rank of the hypergraph being treated.

\begin{Def}
	Let $\h=(V,E)$ be a hypergraph.
	\begin{enumerate}
\item For each edge $e \in E$, we say that an ordered sequence
    $\alpha=(v_1,v_2,\ldots,v_k)$ is an $k$-expanded edge from $e$, if its set
    of the distinct elements is equal to edge $e$, and denote this by $e \prec
    \alpha$.
		
\item For each edge $e \in E$, we define $S(e) = \{\alpha \;| \;e \prec
    \alpha\}$, the set of all $k$-expanded edges from $e$.
		
\item For each edge $e \in E$ and vertex $v \in e$, we define $S(e)_v = \{
    \alpha \in S(e) \;|\; \alpha=(v,v_2\cdots,v_k)\}$,	the set of all
    $k$-expanded  edges in $S(e)$, starting with $v$.
		
\item We define $S(\h) = \displaystyle\bigcup_{e \in E}S(e)$,  the set of all
    $k$-expanded  edges.
		
\item  For each vertex $v \in V$, we define $S(\h)_v = \displaystyle\bigcup_{e
    \in E_{[v]}}S(e)_v$, the set of all  $k$-expanded edges in $S(\h)$, starting with
    $v$.
	\end{enumerate}
\end{Def}

\begin{Obs}\label{lemaS}
Let $\h$ be a hypergraph, For each edge $e \in E$ it holds that
	\begin{enumerate}
\item If $|e| = r$ then $|S(e)| = \displaystyle\sum_{s_1 + \cdots + s_{r} = k}\frac{k!}{s_1!\cdots s_r!}$, with $s_i\geq 1$ for $i \in [r]$.
\vspace{0.2cm}		
\item If $v \in e$ then $|S(e)| = |e||S(e)_v|$
	\end{enumerate}	
\end{Obs}

\begin{Def}
Let $\h$ be a hypergraph on $n$ vertices, we define the adjacency tensor
$A_\h$, as a tensor of dimension $n$ and order $k$ where
	\[ a_{i_1 \dots i_k}= \begin{cases}
	\frac{|e|}{|S(e)|}\; \textrm{if } e \prec (i_1, \dots, i_k) \textrm{ for some } e \in E(\h)\\
	\;\;\; 0 \;\;\; \textrm{otherwise} \end{cases}\]
\end{Def}

For each edge $e \in E(\h)$ denote $a(e) = \frac{|e|}{|S(e)|}$. For each $k$-expanded
edge $\alpha = (i_1,\cdots,i_k) \in S(\h)$ denote $a_\alpha = a_{i_1i_2\cdots
i_k}$, $x^\alpha = x_{i_1}\cdots x_{i_k}$ and $x^{\alpha - i_l}=
x_{i_1}\cdots x_{i_{l-1}}x_{i_{l+1}}\cdots x_{i_k}$. With these
considerations we have

\[(\Ah\x)_i=\sum_{i_2,\dots, i_k = 1}^{n}a_{ii_2\cdots
i_k}x_{i_2}\cdots x_{i_k} = \sum_{\alpha\in S(\h)_i}a_\alpha x^{\alpha-i} =
\sum_{e\in E_{[i]}}a(e)\sum_{\alpha \in S(e)_i}x^{\alpha-i}\]

Therefore to determine the eigenvalues of $\h$ we need to solve the following
system
\begin{equation}\label{eq:sistema}
 \sum_{e\in E_{[i]}}a(e)\sum_{\alpha \in S(e)_i}x^{\alpha-i}=\lambda x_i^{k-1}\quad \forall i\in V(\h)
\end{equation}

Another interesting formula is
\begin{equation}\label{eq:xax}
\x^T\Ah\x = \sum_{i \in V}
\left(x_i\sum_{e\in E_{[i]}}a(e)\sum_{\alpha \in S(e)_i}x^{\alpha-i}\right) =
\sum_{e \in E}a(e)\sum_{\alpha \in S(e)}x^{\alpha}
\end{equation}

For uniform hypergraphs the formulas \ref{eq:sistema} and \ref{eq:xax} are reduced to

 \[(\Ah\x)_i = \sum_{e\in E_{[i]}}x^{e-i}, \quad \x^T\Ah\x = k\sum_{e \in E}x^e\]

\begin{Def}
Let $\h$ be a connected hypergraph on $n$ vertices, and $\x$ the positive
eigenvector associated to $\rho(\h)$ obtained in Theorem \ref{Perron} with
$||\x||_k=1$. We will call $(\rho,\x)$ of principal eigenpair and $\x$ of
principal eigenvector.
\end{Def}

\begin{Def}	
	Let $\h$ be a hypergraph and $\x=(x_v)$ its principal eigenvector.
	\begin{enumerate}
		\item $\quad \displaystyle x_{min} = \min_{ v \in V(\h)}\{x_v\}$
		\vspace{0.2cm}
		\item $\quad \displaystyle x_{max} = \max_{ v \in V(\h)}\{x_v\}$
		\vspace{0.2cm}
		\item $\quad \displaystyle\sigma(\h) = x_{max} - x_{min}$
		\vspace{0.2cm}
		\item $\quad \displaystyle\gamma(\h) = \frac{x_{max}}{x_{min}}$
	\end{enumerate}
\end{Def}

\section{Irregularity measurements of hypergraphs}\label{sec:measure}

In this section we will study the parameters $\sigma(\h)$ and $\gamma(\h)$,
they can be used as measurements of the irregularity of the hypergraph $\h$,
this occurs because $\h$ is regular only when its principal eigenvector has
all coordinates equals. More precisily, we will prove some results that
help to estimate the value of these parameters.

\begin{Teo}\label{teo:reg}
	A hypergraph $\h$ is $r$-regular, if and only if, $(r,\x)$ is its
principal eigenpair, with $\x = (\frac{1}{\sqrt[k]{n}},\ldots,\frac{1}{\sqrt[k]{n}})$.
\end{Teo}
\begin{proof} If $\h$ is $r$-regular then
\[(\Ah\x)_i = \sum_{e\in
E_{[i]}}a(e)\!\!\!\sum_{\alpha \in S(e)_i}\left(\frac{1}{\sqrt[k]{n}}
\right)^{k-1}\!\!\!\!=\sum_{e\in
E_{[i]}}\underbrace{a(e)|S(e)_i|}_{=1}\left(\frac{1}{\sqrt[k]{n}}
\right)^{k-1}\!\!\!\!= d(i)\left(\frac{1}{\sqrt[k]{n}} \right)^{k-1}\!\!\!\!
= rx_i^{k-1}\] Therefore $(r,\x)$ is an eigenpair of $\h$ and by Theorem
\ref{Perron} we conclude that this is the principal eigenpair.

Conversely, if $(r,\x)$ is the principal eigenpair of $\h$, then for each $u
\in V$ we have
\[r\left( \frac{1}{\sqrt[k]{n}}\right)^{k-1}= \sum_{e\in
E_{[u]}}a(e)\sum_{\alpha \in S(e)_u}\left( \frac{1}{\sqrt[k]{n}}\right)^{k-1} =
\sum_{e\in E_{[u]}}\underbrace{a(e)|S(e)_u|}_{=1}\left(\frac{1}{\sqrt[k]{n}}
\right)^{k-1}   = d(u)\left( \frac{1}{\sqrt[k]{n}}\right)^{k-1}.  \] Hence, $ r =
d(u),\; \forall u \in V$. That is $\h$, is $r$-regular.
\end{proof}
The sufficient condition of Theorem \ref{teo:reg} has already been demonstrated in \cite{Banerjee}.

Theorem \ref{TeoDd} below is a generalization for hypergraphs of Theorem 2.8
for graphs due to Cioab\u{a} and Gregory in \cite{Cioaba}.
\begin{Teo}\label{TeoDd}
	Let $\h$ be a connected hypergraph. If $(\rho, \x)$ is its  principal
eigenpair, then
\[\gamma(\h)\geq \max\left\lbrace  \left(
\frac{\Delta}{\rho}\right) ^{\frac{1}{k-1}}, \left(
\frac{\rho}{\delta}\right) ^{\frac{1}{k-1}}\right\rbrace\]
	Moreover
	\begin{enumerate}
\item If all the vertices of maximum and minimum degrees in $\h$ are
    contained only in edges of maximum cardinality, then equality occurs,
    if and only if, both statements are true
		\begin{enumerate}
			
        \item For each $u$ of maximum degree we have $x_u = x_{max}$ and
            $x_p = x_{min}$ whenever $u$ and $p$ are adjacent.
			
        \item For each $v$ of minimum degree we have $x_v = x_{min}$ and
            $x_q = x_{max}$ whenever $v$ and $q$  are adjacent.
		\end{enumerate}
		
\item If any vertex of maximum or minimum degree in $\h$ is contained in an
    edge which does not have maximum cardinality, then equality occurs, if
    and only if, $\h$ is regular.	
	\end{enumerate}
\end{Teo}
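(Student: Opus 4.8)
The plan is to prove the two bounds $\gamma(\h)\ge A$ and $\gamma(\h)\ge B$ separately, where I abbreviate $A=\left(\frac{\Delta}{\rho}\right)^{1/(k-1)}$ and $B=\left(\frac{\rho}{\delta}\right)^{1/(k-1)}$, and only afterwards characterize when the larger of them is attained. For the first bound I would fix a vertex $u$ with $d(u)=\Delta$ and evaluate the eigenvalue system \eqref{eq:sistema} at $i=u$. Every factor appearing in a product $x^{\alpha-u}$ is an entry of $\x$, hence at least $x_{min}$, so $x^{\alpha-u}\ge x_{min}^{k-1}$; summing and using $a(e)|S(e)_u|=1$ (which follows from Remark \ref{lemaS} together with $a(e)=|e|/|S(e)|$) gives $\sum_{e\in E_{[u]}}a(e)|S(e)_u|=d(u)=\Delta$, whence $\rho\,x_u^{k-1}\ge\Delta\,x_{min}^{k-1}$ and therefore $x_{max}\ge x_u\ge A\,x_{min}$. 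Symmetrically, taking a vertex $v$ with $d(v)=\delta$ and bounding each $x^{\alpha-v}\le x_{max}^{k-1}$ yields $\rho\,x_v^{k-1}\le\delta\,x_{max}^{k-1}$, hence $x_{min}\le x_v\le B^{-1}x_{max}$ and $\gamma(\h)\ge B$. Taking the maximum finishes the inequality.

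For the easy direction of the equality statement I would simply substitute. If (a) holds, then evaluating \eqref{eq:sistema} at a maximum-degree vertex gives $\rho\,x_{max}^{k-1}=\Delta\,x_{min}^{k-1}$, i.e. $\gamma(\h)^{k-1}=\Delta/\rho$; if (b) holds, then evaluating at a minimum-degree vertex gives $\rho\,x_{min}^{k-1}=\delta\,x_{max}^{k-1}$, i.e. $\gamma(\h)^{k-1}=\rho/\delta$. Hence assuming both (a) and (b) forces $\Delta/\rho=\rho/\delta$, so that $A=B=\gamma(\h)$ and equality indeed holds. This already shows that the two conditions are compatible only when the two expressions inside the maximum coincide.

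The heart of the proof is the converse under the Case 1 hypothesis. Assuming $\gamma(\h)=\max\{A,B\}$, the single inequality that produced this maximum must be an equality throughout, so all the product estimates used in the corresponding step are tight. Here the hypothesis that every extremal-degree vertex lies only in edges of cardinality $k$ is essential: for such an edge $e$ and $w\in e$, each $\alpha\in S(e)_w$ uses the $k-1$ vertices of $e\setminus\{w\}$ without repetition, so $x^{\alpha-w}=\prod_{z\in e\setminus\{w\}}x_z$, and the equality $x^{\alpha-w}=x_{min}^{k-1}$ (respectively $=x_{max}^{k-1}$) forces every neighbor of $w$ to equal $x_{min}$ (respectively $x_{max}$). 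I would then upgrade this one-sided conclusion to both (a) and (b) by propagating extreme values through the connected hypergraph: once a vertex is pinned to an extreme entry, re-evaluating \eqref{eq:sistema} at it and at its neighbors pins the adjacent entries to the opposite extreme, and iterating along a connecting path (which exists by connectedness) forces the symmetric bipartite-type configuration in which both conditions hold and $A=B$.

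For Case 2 the mechanism is different and, I expect, cleaner. If an extremal-degree vertex $v$ lies in an edge $e$ with $|e|<k$, then $S(e)_v$ contains expanded edges in which $v$ itself is repeated, such as $\alpha=(v,v,\ldots,v,w)$; the tight product condition then reads $x_v^{k-2}x_w=x_{max}^{k-1}$ with $x_v=x_{min}$ (or the symmetric identity on the maximum-degree side), and since $x_w\le x_{max}$ this is possible only when $x_{min}=x_{max}$. Thus in Case 2 equality forces $\x$ to be constant, so by Theorem \ref{teo:reg} the hypergraph is regular. The main obstacle I anticipate is the converse of Case 1, namely the rigorous propagation argument showing that tightness in one of the two chains forces tightness in the other (equivalently $A=B$) rather than just the one-sided conclusion; controlling the combinatorics of $S(e)_v$ so that the product bounds are tight exactly under the stated conditions is the delicate bookkeeping underlying every step.
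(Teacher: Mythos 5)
Your derivation of the basic inequality coincides with the paper's: evaluate \eqref{eq:sistema} at a vertex $u$ with $d(u)=\Delta$ (resp.\ $v$ with $d(v)=\delta$), bound each factor of $x^{\alpha-u}$ below by $x_{min}$ (resp.\ above by $x_{max}$), and use $a(e)|S(e)_u|=1$ from Remark \ref{lemaS} to collapse the double sum to $d(u)$. Your Case~2 mechanism is also exactly the paper's: when an extremal-degree vertex lies in an edge of cardinality less than $k$, some expanded edge repeats that vertex, so tightness forces it to carry both $x_{max}$ and $x_{min}$, hence $x_{min}=x_{max}$ and regularity follows (Theorem \ref{teo:reg}). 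Your "easy direction," including the observation that (a) and (b) together force $\left(\Delta/\rho\right)^{1/(k-1)}=\left(\rho/\delta\right)^{1/(k-1)}$, is sound.

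The genuine gap is where you yourself flagged it: the "only if" half of Case~1. Your propagation step --- once a vertex is pinned to an extreme entry, re-evaluating \eqref{eq:sistema} at it pins its neighbors to the opposite extreme --- is not valid. If $p$ is a neighbor of a maximum-degree vertex and tightness has given $x_p=x_{min}$, the eigenequation at $p$ yields only $\rho\, x_{min}^{k-1}=\sum_{e\in E_{[p]}}a(e)\sum_{\alpha\in S(e)_p}x^{\alpha-p}$, from which one extracts $d(p)\le\rho\le d(p)\,\gamma(\h)^{k-1}$ and nothing about individual neighbors: the entries adjacent to $p$ are forced to $x_{max}$ only if the \emph{upper} chain at $p$ is tight, and that is known only when $d(p)=\delta$ and $\gamma(\h)^{k-1}=\rho/\delta$ --- precisely the statement you are trying to propagate toward, so the argument is circular. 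Moreover, even if the local step held, iterating it would impose a min/max two-coloring along every path; in a hypergraph (e.g., whenever two neighbors of $u$ are themselves adjacent) this collapses to $x_{min}=x_{max}$, so the method would "prove" regularity rather than conditions (a) and (b). Note that the paper does not propagate at all: it reads (a) and (b) directly off the tightness conditions of the two chains, in effect treating equality as simultaneous tightness of both $\gamma(\h)^{k-1}\ge\Delta/\rho$ and $\gamma(\h)^{k-1}\ge\rho/\delta$. Your concern about the asymmetric case, where $\gamma(\h)$ equals the larger of the two bounds but strictly exceeds the smaller, is legitimate and is passed over in silence by the paper; but the mechanism you propose to exclude it fails, so your proof of this direction remains incomplete.
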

\begin{proof} Let $u,v \in V$ be vertices, such that $d(u) = \Delta$ and $d(v) = \delta$,
so we have
\[\rho x_{max}^{k-1}\geq \rho x_u^{k-1} =  \sum_{e\in
E_{[u]}}a(e)\sum_{\alpha \in S(e)_u}x^{\alpha-u}\geq \sum_{e\in E_{[u]}}x_{min}^{k-1}
= \Delta x_{min}^{k-1}  \Rightarrow \left(
\frac{x_{max}}{x_{min}}\right)^{k-1}\geq \frac{\Delta}{\rho}\]

\[\rho x_{min}^{k-1}\leq \rho x_v^{k-1} =  \sum_{e\in E_{[v]}}a(e)\sum_{\alpha \in S(e)_v}x^{\alpha-v}\leq \sum_{e\in E_{[v]}}x_{max}^{k-1} = \delta x_{max}^{k-1} \Rightarrow \left( \frac{x_{max}}{x_{min}}\right)^{k-1}\geq \frac{\rho}{\delta}\]

\begin{enumerate}
	\item Notice that
	\[\rho x_{max}^{k-1} = \rho x_u^{k-1}\; \Leftrightarrow\; x_u = x_{max}, \quad \sum_{e \in E_{[u]}}x^{e-u}=\Delta x_{min}^{k-1}\;\Leftrightarrow\;  x_p = x_{min}\; \forall p \in N(u)\]
	\[\rho x_{min}^{k-1} = \rho x_v^{k-1}\; \Leftrightarrow\; x_v = x_{min}, \quad \sum_{e \in E_{[v]}}x^{e-u}=\delta x_{max}^{k-1}\;\Leftrightarrow\;  x_q = x_{max}\; \forall q \in N(v)\]
	
	\item Suppose that there is a vertex $u$ of maximum degree contained in one edge that does not have maximum cardinality, the other case is analogous.
	\[\rho x_{max}^{k-1} = \rho x_u^{k-1}\; \Leftrightarrow\; x_u = x_{max},\]\[\sum_{e\in E_{[u]}}a(e)\!\!\!\sum_{\alpha \in S(e)_u}x^{\alpha-u}=\Delta x_{min}^{k-1}\;\Leftrightarrow\;  x_p = x_{min}\; \forall p \in N(u)\cup\{u\}\]
	That is, we would have $x_{max} = x_{min}$ and therefore equality is true, if and only if, $\h$ is regular.	
\end{enumerate}
\end{proof}

We observe that in Theorem \ref{TeoDd} the equality is true, for example, for
the star $S_n$.

\begin{Cor}\label{coroli}
	Let $\h$ be a hypergraph. If $ (\rho, \x) $ is its principal eigenpair, then
		\[\gamma(\h)\geq \left( \frac{\Delta}{\delta}\right)^{\frac{1}{2(k-1)}}\]
		Equality holds under the same conditions of Theorem \ref{TeoDd}.	
\end{Cor}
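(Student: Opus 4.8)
The plan is to obtain the bound by combining the two intermediate inequalities that appear inside the proof of Theorem \ref{TeoDd}. That proof shows, for vertices $u,v$ of maximum and minimum degree respectively, that
\[\left(\frac{x_{max}}{x_{min}}\right)^{k-1}\geq \frac{\Delta}{\rho} \quad\text{and}\quad \left(\frac{x_{max}}{x_{min}}\right)^{k-1}\geq \frac{\rho}{\delta}.\]
Multiplying these two estimates, the factor $\rho$ cancels, giving $\gamma(\h)^{2(k-1)}\geq \Delta/\delta$, and extracting the positive $2(k-1)$-th root yields the claimed inequality. Equivalently, since the maximum of two positive reals dominates their geometric mean, the bound in Theorem \ref{TeoDd} already implies
\[\gamma(\h)\geq \max\left\{\left(\tfrac{\Delta}{\rho}\right)^{\frac{1}{k-1}},\left(\tfrac{\rho}{\delta}\right)^{\frac{1}{k-1}}\right\}\geq\left(\frac{\Delta}{\delta}\right)^{\frac{1}{2(k-1)}}.\]

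For the equality discussion I would argue as follows. Because $\gamma(\h)^{k-1}$ is at least each of $\Delta/\rho$ and $\rho/\delta$, their product satisfies $\gamma(\h)^{2(k-1)}\geq (\Delta/\rho)(\rho/\delta)=\Delta/\delta$, with equality \emph{only} when both factor inequalities are simultaneously tight. Tightness of the first inequality is exactly condition (a) of Theorem \ref{TeoDd} (each maximum-degree vertex attains $x_{max}$ and all of its neighbours attain $x_{min}$), while tightness of the second is exactly condition (b) (each minimum-degree vertex attains $x_{min}$ and all of its neighbours attain $x_{max}$). The same dichotomy as in Theorem \ref{TeoDd} then applies: if some vertex of extreme degree lies in an edge of non-maximal cardinality one is forced into the regular case, and otherwise the characterization is given by (a) and (b). Hence equality in the corollary holds exactly under the conditions of Theorem \ref{TeoDd}.

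The inequality itself is a one-line algebraic consequence, so the only genuinely delicate point is the equality case. There I must verify that the two chains of inequalities can indeed be made tight at the same time — in particular that the cancellation of $\rho$ in the product does not conceal an extra constraint. Simultaneous tightness forces $(\Delta/\rho)^{1/(k-1)}=(\rho/\delta)^{1/(k-1)}=\gamma(\h)$, hence $\rho=\sqrt{\Delta\delta}$; checking that this value is consistent with conditions (a) and (b), rather than an obstruction to them, is the step I would write out with the most care.
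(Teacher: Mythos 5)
Your proof is correct and takes essentially the same route as the paper: the paper's proof likewise observes that $\left(\frac{\Delta}{\delta}\right)^{\frac{1}{2(k-1)}}$ is the geometric mean of $\left(\frac{\Delta}{\rho}\right)^{\frac{1}{k-1}}$ and $\left(\frac{\rho}{\delta}\right)^{\frac{1}{k-1}}$, so the maximum in Theorem \ref{TeoDd} dominates it, which is exactly your multiply-and-take-roots argument. Your equality analysis (simultaneous tightness of both factor inequalities, forcing $\rho=\sqrt{\Delta\delta}$ and conditions (a) and (b) of Theorem \ref{TeoDd}) is in fact more careful than the paper's, which asserts the equality clause without further argument.
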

\begin{proof} We notice that $\left( \frac{\Delta}{\delta}\right)^{\frac{1}{2(k-1)}}$
is the geometric mean between $
\left(\frac{\Delta}{\rho}\right)^{\frac{1}{k-1}}$ and $ \left(
\frac{\rho}{\delta}\right)^{\frac{1}{k-1}}$, so
	\[\gamma(\h)\geq \max\left\lbrace  \left(\frac{\Delta}{\rho}\right)^{\frac{1}{k-1}}, \left(\frac{\rho}{\delta}\right)^{\frac{1}{k-1}}\right\rbrace \geq  \left( \frac{\Delta}{\delta}\right)^{\frac{1}{2(k-1)}}\]
\end{proof}

 \begin{Teo}
 	Let $\h$ be a connected hypergraph on $n$ vertices. If $(\rho, \x)$ is its
principal eigenpair, then
 	\[\sigma(\h) \geq \frac{\Delta^\frac{1}{2(k-1)} - \delta^\frac{1}{2(k-1)}}{\Delta^\frac{1}{2(k-1)}n^\frac{1}{k}}\]
 	Equality holds if and only if $\h$ is regular.
 \end{Teo}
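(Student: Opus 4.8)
The plan is to combine the multiplicative bound on the ratio $\gamma(\h)$ from Corollary \ref{coroli} with a lower bound on $x_{max}$ coming from the normalization $\|\x\|_k = 1$. First I would rewrite Corollary \ref{coroli} as a statement about $x_{min}$: since $\gamma(\h) = x_{max}/x_{min} \geq (\Delta/\delta)^{1/(2(k-1))}$ and all entries are positive, this gives
\[x_{min} \leq \left(\frac{\delta}{\Delta}\right)^{\frac{1}{2(k-1)}} x_{max}.\]
Substituting into $\sigma(\h) = x_{max} - x_{min}$ then yields
\[\sigma(\h) \geq x_{max}\left(1 - \left(\frac{\delta}{\Delta}\right)^{\frac{1}{2(k-1)}}\right).\]

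Next I would produce the lower bound on $x_{max}$. Because $\|\x\|_k = 1$ we have $\sum_{v \in V} x_v^k = 1$, and since $x_v \leq x_{max}$ for every vertex this gives $1 \leq n\, x_{max}^k$, hence $x_{max} \geq n^{-1/k}$. Plugging this into the previous display and simplifying the factor via $\frac{1}{n^{1/k}}\left(1 - (\delta/\Delta)^{1/(2(k-1))}\right) = \frac{\Delta^{1/(2(k-1))} - \delta^{1/(2(k-1))}}{\Delta^{1/(2(k-1))} n^{1/k}}$ produces the claimed inequality.

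For the equality discussion, I would observe that the argument uses only two inequalities: Corollary \ref{coroli} and $x_{max} \geq n^{-1/k}$. The second becomes an equality precisely when $x_v = x_{max}$ for every $v$, that is, when $\x$ is the constant vector, which by Theorem \ref{teo:reg} happens exactly when $\h$ is regular. If $\h$ is regular then $\Delta = \delta$ and $\x$ is constant, so both sides equal $0$. Conversely, if $\Delta > \delta$ the factor $1 - (\delta/\Delta)^{1/(2(k-1))}$ is strictly positive, so equality throughout the chain would force $x_{max} = n^{-1/k}$, hence $\x$ constant, hence $\h$ regular and $\Delta = \delta$, a contradiction. Therefore equality holds if and only if $\h$ is regular.

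The argument is short; the only delicate point is the equality analysis, where one must verify that forcing the normalization inequality to be tight makes $\x$ constant, and that this is incompatible with $\Delta > \delta$ whenever the irregularity factor is nonzero. The remaining steps are routine substitution and algebraic simplification, so I do not anticipate a substantial obstacle there.
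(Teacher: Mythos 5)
Your proposal is correct and follows essentially the same route as the paper: rewrite Corollary \ref{coroli} as $x_{min} \leq (\delta/\Delta)^{1/(2(k-1))}x_{max}$, substitute into $\sigma(\h)$, and invoke $x_{max} \geq n^{-1/k}$ from the normalization $\|\x\|_k = 1$. Your equality analysis is in fact slightly more careful than the paper's (which simply asserts equality iff $x_{max} = n^{-1/k}$), since you explicitly handle the case $\Delta > \delta$ via contradiction and justify that tightness of the normalization bound forces $\x$ constant, hence regularity by Theorem \ref{teo:reg}.
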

\begin{proof} Firstly, we observe that
\[\frac{x_{max}}{x_{min}}\geq \left(
\frac{\Delta}{\delta}\right)^\frac{1}{2(k-1)} \Rightarrow x_{min} \leq\left(
\frac{\delta}{\Delta}\right)^\frac{1}{2(k-1)} x_{max}\]

Multiplying the inequality by $ -1 $ and adding $ x_ {max} $  to both sides,
we arrive at the following inequality
\[x_{max} - x_{min} \geq \left(1 -
\left( \frac{\delta}{\Delta}\right)^\frac{1}{2(k-1)}\right)x_{max} =
\frac{\Delta^\frac{1}{2(k-1)} -
\delta^\frac{1}{2(k-1)}}{\Delta^\frac{1}{2(k-1)}}x_{max} \] Note that
$x_{max} \geq \frac{1}{\sqrt[k]{n}}$, so we conclude that
 \[x_{max} - x_{min} \geq \frac{\Delta^\frac{1}{2(k-1)} - \delta^\frac{1}{2(k-1)}}{\Delta^\frac{1}{2(k-1)}n^\frac{1}{k}}\]
 \vspace{0.2cm}

 Equality occurs, if and only if, $x_{max} = \frac{1}{\sqrt[k]{n}}$, that is whenever $\h$ is regular.
\end{proof}

\section{Bounds for the principal eigenvector entries}\label{sec:bounds}
In this section we present some results on the extreme entries of the principal eigenvector of a general hypergraph, relating it with important classical parameters.

The Theorem \ref{tminmax} is a generalization for hypergraphs of Lemma 3.3 for graphs given by Cioab\u{a} and Gregory in \cite{Cioaba}.

\begin{Teo}\label{tminmax}	Let $\h$ be a hypergraph on $n$
vertices. If $ (\rho, \x) $ is its principal eigenpair, then	
\begin{itemize}	
\item[(a)] $x_{max} \geq  \frac{1}{\sqrt[k]{\left(
    \frac{\delta}{\Delta}\right)^{\frac{k}{2(k-1)}} +n-1}}$. For $k \geq
    3$, the equality holds if and only if $\h$ is regular.
		
\item[(b)] $x_{min} \leq  \frac{1}{\sqrt[k]{\left(
    \frac{\Delta}{\delta}\right)^{\frac{k}{2(k-1)}} +n-1}}$.  For $k \geq
    3$, the equality holds if and only if $\h$ is regular.
	\end{itemize}
\end{Teo}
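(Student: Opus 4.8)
The plan is to derive both bounds from the normalization $\sum_{v \in V} x_v^k = 1$ (recall $\|\x\|_k = 1$) together with the ratio estimate $\gamma(\h) = x_{max}/x_{min} \geq (\Delta/\delta)^{1/(2(k-1))}$ supplied by Corollary \ref{coroli}. For part (a) I would single out one vertex $p$ attaining $x_{min}$ and split the normalization as $1 = x_{min}^k + \sum_{v \neq p} x_v^k$. Bounding every remaining entry by $x_v \leq x_{max}$ gives $1 \leq x_{min}^k + (n-1)x_{max}^k$, and substituting the $k$-th power of Corollary \ref{coroli}, namely $x_{min}^k \leq (\delta/\Delta)^{k/(2(k-1))}x_{max}^k$, yields
\[1 \leq \left[(\delta/\Delta)^{k/(2(k-1))} + n - 1\right]x_{max}^k,\]
which is exactly the asserted lower bound on $x_{max}$. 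Part (b) is the mirror image: single out a vertex $w$ attaining $x_{max}$, bound the other entries below by $x_v \geq x_{min}$ to get $1 \geq x_{max}^k + (n-1)x_{min}^k$, and use $x_{max}^k \geq (\Delta/\delta)^{k/(2(k-1))}x_{min}^k$ to obtain $1 \geq \left[(\Delta/\delta)^{k/(2(k-1))}+n-1\right]x_{min}^k$.

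The ``if'' direction of the equality statement is immediate: if $\h$ is $r$-regular, Theorem \ref{teo:reg} gives $\x = (n^{-1/k},\ldots,n^{-1/k})$ and $\Delta = \delta$, so the factor $(\delta/\Delta)^{k/(2(k-1))} = 1$, both bounds collapse to $n^{-1/k}$, and equality is attained.

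For the converse I expect the main obstacle. Equality in (a) forces two things at once: every entry other than $x_{min}$ must equal $x_{max}$ (so that when $x_{min} < x_{max}$ there is a \emph{unique} minimum vertex $p$), and equality must hold in Corollary \ref{coroli}. Since that corollary sandwiches $\gamma$ between a maximum and the corresponding geometric mean, equality there forces both branches of the bound in Theorem \ref{TeoDd} to be tight. This is exactly the place where the hypothesis $k \geq 3$ is indispensable, because for $k=2$ the star $S_n$ realizes equality without being regular, as noted after Theorem \ref{TeoDd}. I would argue by contradiction: assume $\h$ is not regular, so $\Delta > \delta$ and $x_{min} < x_{max}$ with $p$ unique. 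If $\h$ falls into case (2) of Theorem \ref{TeoDd}, equality forces regularity, a contradiction. Otherwise we are in case (1), whose condition (a) says a maximum-degree vertex $u$ has $x_u = x_{max}$ and $x_q = x_{min}$ for every neighbor $q \in N(u)$; since $p$ is the only vertex valued $x_{min}$, this forces $N(u) = \{p\}$, so every edge through $u$ is contained in $\{u,p\}$ and hence has cardinality at most $2 < k$. This contradicts the case-(1) hypothesis that $u$ lies only in edges of the maximum cardinality $k$. Thus no non-regular hypergraph can achieve equality, proving the converse. Part (b) is handled identically, applying condition (b) of Theorem \ref{TeoDd} to a minimum-degree vertex.
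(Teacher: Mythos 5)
Your proposal is correct and follows essentially the same route as the paper: the same split of the normalization $1=\sum_v x_v^k$ into one extreme entry plus $n-1$ copies of the other, the same substitution of the ratio bound $\gamma\geq(\Delta/\delta)^{1/(2(k-1))}$ from Corollary \ref{coroli}, and the same $k\geq 3$ equality analysis in which a maximum-degree vertex lying only in edges of cardinality $k\geq 3$ must have at least two neighbors valued $x_{min}$ while only one non-$x_{max}$ entry exists. Your write-up of the converse is in fact more explicit than the paper's (you cleanly separate cases (1) and (2) of Theorem \ref{TeoDd} and note that equality in Corollary \ref{coroli} forces both branches there to be tight), but the underlying argument is identical.
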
	
\begin{proof}

To prove part (a), we observe that \[1 = \sum_{u \in V} x_v^k \leq x_{min}^k
+    (n-1)x_{max}^k =(\gamma^{-k} +n -1)x_{max}^k \leq
    \left(\left(\frac{\delta}{\Delta} \right)^\frac{k}{2(k-1)}+n-1
    \right)x_{max}^k. \]

Now note that the equality occurs if, and only if, $n-1$ entries of $\x$ are equal $x_{max}$ and one entry is equals $x_{min}$. Observe yet that equality occurs only if the equality on  Theorem \ref{TeoDd} occurs as well. Under these conditions we can assume that all vertices of maximum or minimum degree are contained only in edges of maximum cardinality -- otherwise, the hypergraph would be regular. I.e., for each $u \in V$ such that $d(u) = \Delta$ must be true $x_q = x_{min}$ for every neighbor $q$ of $u$. Since $k\geq3$, then $u$ must have at least two neighbors, 
but only one can take on a value other than $x_{max}$, therefore $x_{min}
= x_{max}$, that is $\h$ is regular.

Similarly, we prove part (b)
\end{proof}

Theorem \ref{txm} below is a generalization for hypergraphs of Theorem 3.4
for graphs due to Cioab\u{a} and Gregory \cite{Cioaba}.
\begin{Teo}\label{txm}
Let $\h$ be a hypergraph. If $ (\rho, \x) $ is its principal eigenpair, then	

\begin{itemize}
	\item[(a)] $\displaystyle x_{max}\geq \sqrt[k]{\frac{\rho}{nd(\h)}}$. Equality holds if and only if
	$\h$ is regular.
	\item[(b)] $\displaystyle x_{max}\geq \frac{\rho^{\frac{1}{k-1}}}{\left( \displaystyle \sum_{v \in V}d(v)^\frac{k}{k-1}\right)^\frac{1}{k} }$. Equality holds if and only if $\h$ is regular.
\end{itemize}
\end{Teo}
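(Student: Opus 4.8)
The plan is to base both inequalities on a single per-vertex estimate relating the eigenvector entries to the degrees, and then to feed it into the normalization $\sum_{v}x_v^k=1$ in two slightly different ways. Starting from the eigenequation \ref{eq:sistema}, for a fixed vertex $v$ I would bound each expanded-edge product crudely by the maximum entry: since $x^{\alpha-v}$ is a product of $k-1$ coordinates, $x^{\alpha-v}\leq x_{max}^{k-1}$. Combining this with the combinatorial identity $a(e)|S(e)_v|=1$, which follows from Remark \ref{lemaS} together with $a(e)=|e|/|S(e)|$ and $|S(e)|=|e||S(e)_v|$ for $v\in e$, collapses the inner sums and gives
\[\rho\, x_v^{k-1}=\sum_{e\in E_{[v]}}a(e)\sum_{\alpha\in S(e)_v}x^{\alpha-v}\;\leq\;\sum_{e\in E_{[v]}}a(e)|S(e)_v|\,x_{max}^{k-1}=d(v)\,x_{max}^{k-1}\]
for every $v\in V$. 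This inequality is the common engine for both parts.

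For part (a) I would multiply the displayed inequality by $x_v$ and use $x_v\leq x_{max}$ to get $\rho\, x_v^{k}\leq d(v)\,x_{max}^{k}$; summing over $v$ and invoking $\sum_v x_v^k=1$ together with $\sum_v d(v)=n\,d(\h)$ yields $\rho\leq n\,d(\h)\,x_{max}^k$, which is the claimed bound after taking $k$-th roots. For part (b) the point is to avoid the lossy replacement $x_v\leq x_{max}$: instead I would raise the per-vertex inequality to the power $k/(k-1)$, so that its left side becomes exactly $x_v^{k}$, obtaining $x_v^{k}\leq \rho^{-k/(k-1)}d(v)^{k/(k-1)}x_{max}^{k}$; summing over $v$ and again using $\sum_v x_v^k=1$ produces $\rho^{k/(k-1)}\leq x_{max}^{k}\sum_v d(v)^{k/(k-1)}$, which rearranges to the stated inequality. (Part (a) can alternatively be read directly off the quadratic form \ref{eq:xax} via $x^{\alpha}\leq x_{max}^k$, but the per-vertex route unifies the two parts.)

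For the equality analysis I would trace back through each chain. In part (a), equality forces $x_v=x_{max}$ for every $v$ of positive degree, coming from the step $x_v\leq x_{max}$ weighted by $d(v)\geq 1$; in part (b), equality forces the original per-vertex estimate to be tight, hence $x^{\alpha-v}=x_{max}^{k-1}$ for all relevant $\alpha$, so every vertex sharing an edge with $v$ attains $x_{max}$. The delicate point in both cases, and the step I expect to be the main obstacle, is the propagation: the per-vertex inequality could a priori be tight at some vertices and slack at others, so one must argue that equality is forced simultaneously everywhere and that, since a connected hypergraph has no isolated vertices, this makes $\x$ globally constant rather than only locally constant. Once $\x$ is constant, Theorem \ref{teo:reg} identifies $\h$ as regular, and the converse is the one-line verification that $\x=(n^{-1/k},\dots,n^{-1/k})$, $\rho=r$, $d(v)=d(\h)=r$ turns both bounds into equalities.
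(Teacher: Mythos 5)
Your proposal is correct and follows essentially the same route as the paper's proof: the same per-vertex estimate $\rho x_v^{k-1}\leq d(v)x_{max}^{k-1}$ (via $a(e)|S(e)_v|=1$), multiplied by $x_v$ for part (a) and raised to the power $\frac{k}{k-1}$ for part (b), then summed against the normalization $\sum_{v}x_v^k=1$. The ``delicate point'' you flag is in fact immediate --- equality in a sum of termwise inequalities forces termwise equality, so $x_v=x_{max}$ at every vertex and regularity follows --- and your treatment of it is, if anything, more explicit than the paper's one-line equality claim.
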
	
\begin{proof}
To prove the part (a), observe that, for each $u \in V$ must be true
\[\rho x_u^{k-1} = \sum_{e\in E_{(u)}}a(e)\sum_{\alpha \in S(e)_u}x^{\alpha-u} \leq \sum_{e \in E_{(u)}}x_{max}^{k-1} = d(u)x_{max}^{k-1}\]

So it is worth $\rho x_u^k \leq d(u)x_{max}^k$.  Summing over the set of vertices, we have
\[\rho \sum_{u \in V}x_u^{k} \leq \sum_{u \in V}d(u)x_{max}^k\quad \Rightarrow \quad \rho \leq nd(\h)x_{max}^k \quad \Rightarrow \quad  x_{max}\geq \sqrt[k]{\frac{\rho}{nd(\h)}}\]

Note that the equality occurs, if and only if, $ x_u = x_{max} $ for all $ u \in V $. That is, equality occurs only when $ \h $ is regular.

For the part (b), notice that $\rho x_u^{k-1} \leq d(u)x_{max}^{k-1}$, hence $\rho^\frac{k}{k-1} x_u^{k} \leq d(u)^\frac{k}{k-1}x_{max}^{k}$. Adding on the set of vertices, we have \[\rho^\frac{k}{k-1}\sum_{u \in V}
x_u^{k} \leq  \sum_{u \in V}d(u)^\frac{k}{k-1}x_{max}^{k}\quad \Rightarrow
\quad \rho^\frac{k}{k-1} \leq x_{max}^k\left( \sum_{u \in
V}d(u)^\frac{k}{k-1}\right).    \]

For the same reason of the part (a), the equality holds only when $\h$ is regular.
\end{proof}

Now we will prove Theorem \ref{teoxmin1}, which we state again for easy
reference.

\noindent\textbf{Theorem \ref{teoxmin1}.}	\textit{Let $ \h $ be a connected hypergraph on $n$ vertices. If $(\rho, \x)$ is its
	principal eigenpair, then}

	\[x_{min} \leq \sqrt[k]{\frac{\delta}{\rho+\delta(n-1)}}.\]

\textit{The equality holds if and only if there is a vertex $v$ such that, for all $w
\in V-\{v\}$, we have  $x_w = x_{min}$ and $x_v =
\sqrt[k]{\frac{\rho}{\delta}}x_{min}$.}

\begin{proof} We notice that
\[\rho x_i^{k} = \sum_{e \in E_{[i]}}a(e)\sum_{\alpha
\in S(e)_i}x^{\alpha} \leq \sum_{e \in E_{[i]}}a(e)\sum_{\alpha \in
S(e)_i}x^{max} = d(i) x_{max}^k,\,\forall i \in V(\h).\]

Therefore
\begin{equation}\label{EqmM}
\rho x_{min}^k \leq \delta x_{max}^k
\end{equation}
Let us choose a vertex $v$ such that $x_v = x_{max}$, so the following
inequality holds
\begin{equation}\label{EqmM2}
\delta(n-1)x_{min}^k \leq \delta\sum_{i \in V-\{v\}}x_i^k
\end{equation}

Adding the inequalities \ref{EqmM} and \ref{EqmM2} we conclude
that
\[x_{min}^k(\rho+\delta(n-1)) \leq \delta\sum_{i \in V}x_i^k = \delta
\quad \Rightarrow \quad x_{min} \leq
\sqrt[k]{\frac{\delta}{\rho+\delta(n-1)}}\]

To finish the proof, we just notice that

\[\delta(n-1)x_{min}^k =
\delta\sum_{i \in V-\{v\}}x_i^k\quad \Leftrightarrow \quad x_w = x_{min}\;
\forall\; w \neq v.\]

Further \[\rho x_{min}^k = \delta x_{max}^k \quad
\Leftrightarrow \quad x_{v} = \sqrt[k]{\frac{\rho}{\delta}}x_{min}\]
\end{proof}
Note that the equality in Theorem \ref{teoxmin1} is true, for example,
for regular hypergraphs or for the star $S_n$.
\begin{Obs}
	The equality in Theorem \ref{teoxmin1} is sharper
	than the Theorem \ref{tminmax}, when
	$$\rho \geq \displaystyle\sqrt[2(k-1)]{\Delta^k\delta^{k-1}}$$
\end{Obs}
	For completeness, we will prove this statement. Indeed,
	
	$$\rho \geq \displaystyle\sqrt[2(k-1)]{\Delta^k\delta^{k-1}} \quad\Rightarrow\quad \frac{\rho}{\delta} \geq \displaystyle\left( \frac{\Delta}{\delta}\right)^\frac{k}{2(k-1)} \quad\Rightarrow\quad  \frac{1}{\frac{\rho}{\delta} +n-1}\leq \frac{1}{\left( \frac{\Delta}{\delta}\right)^\frac{k}{2(k-1)} +n-1}$$
	Thus we conclude that
	
	$$\sqrt[k]{\frac{\delta}{\rho+\delta(n-1)}} \leq \frac{1}{\sqrt[k]{\left(
			\frac{\delta}{\Delta}\right)^{\frac{k}{2(k-1)}} +n-1}}$$

\section{Measures for centering and regularity of edges}\label{sec:edges}

In this section we will study the parameters  $x^{max}$ and $x^{min}$ that
are still little explored, even for graphs. Just as $x_{max}$ and $x_{min}$
can be used to determine the most central and peripheral vertices, we believe
that the values $x^{max}$ and $x^{min}$ have a similar role for the edges of
a uniform hypergraph.

\begin{Def}
	Let $\h$ be a $k$-graph and $\x=(x_v)$ its principal eigenvector.
	\[x^{min} = \min_{e \in E(\h)}\{x^e\}, \quad x^{max} = \max_{e \in E(\h)}\{x^e\},\quad \Gamma(\h) = \frac{x^{max}}{x^{min}}\]			
\end{Def}

\begin{Teo}\label{lemapeso}	
Let $\h$ be a connected $k$-graph. If $(\rho, \x)$ is its principal
eigenpair, then
	\begin{itemize}
		\item[(a)] $\frac{\delta}{\rho}x^{min} \leq x_{min}^k \leq x^{min}$
		\item[(b)] $x^{max} \leq x_{max}^k \leq \frac{\Delta}{\rho}x^{max}$
	\end{itemize}
	If $\h$ is regular then the equalities hold.
\end{Teo}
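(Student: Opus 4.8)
The plan is to reduce everything to a single identity obtained by multiplying the uniform eigenvector equation by $x_i$. Starting from the reduced formula $(\Ah\x)_i = \sum_{e\in E_{[i]}} x^{e-i} = \rho\, x_i^{k-1}$ and multiplying through by $x_i$, while using that $x^e = x_i\, x^{e-i}$ for every edge $e$ containing $i$, I obtain
\[
\rho\, x_i^k \;=\; \sum_{e\in E_{[i]}} x^e \qquad \forall\, i \in V(\h).
\]
This identity is the whole engine of the argument: the left side isolates a single vertex entry, while the right side is a sum of edge values over the edges incident to that vertex.

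Next I would dispatch the two purely combinatorial inequalities, namely the right inequality of (a) and the left inequality of (b), which do not involve $\rho$ at all. Since every vertex entry satisfies $x_{min} \le x_v \le x_{max}$, each edge value $x^e = \prod_{v\in e} x_v$ — a product of exactly $k$ such factors, as $\h$ is $k$-uniform — is squeezed by $x_{min}^k \le x^e \le x_{max}^k$. Taking the minimum over edges gives $x_{min}^k \le x^{min}$, and taking the maximum gives $x^{max} \le x_{max}^k$.

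For the two spectral inequalities I would evaluate the identity at an extremal vertex. Choosing $i$ with $x_i = x_{min}$ and bounding each term of the sum below by $x^{min}$ and the number of terms below by $\delta$ yields $\rho\, x_{min}^k = \sum_{e\in E_{[i]}} x^e \ge d(i)\, x^{min} \ge \delta\, x^{min}$, which rearranges to $\frac{\delta}{\rho} x^{min} \le x_{min}^k$, completing (a). Symmetrically, choosing $i$ with $x_i = x_{max}$ and bounding each $x^e$ above by $x^{max}$ and $d(i)$ above by $\Delta$ gives $\rho\, x_{max}^k \le \Delta\, x^{max}$, that is $x_{max}^k \le \frac{\Delta}{\rho} x^{max}$, completing (b).

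Finally, the regular case follows at once from Theorem \ref{teo:reg}: if $\h$ is $r$-regular then $\rho = r = \delta = \Delta$ and $\x$ is the constant vector $n^{-1/k}(1,\dots,1)$, so every vertex entry equals $n^{-1/k}$, every edge value equals $n^{-1}$, and hence $x_{min}^k = x_{max}^k = x^{min} = x^{max} = 1/n$ while $\frac{\delta}{\rho} = \frac{\Delta}{\rho} = 1$; all four inequalities collapse to equalities. I do not expect a genuine obstacle here — the only thing requiring care is keeping the vertex extremes $x_{min}, x_{max}$ cleanly separated from the edge extremes $x^{min}, x^{max}$, and bounding the degree of the chosen extremal vertex in the correct direction (below by $\delta$ when deriving a lower bound, above by $\Delta$ when deriving an upper bound).
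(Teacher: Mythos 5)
Your proof is correct and follows essentially the same route as the paper's: both establish the sandwich $x_{min}^k \le x^e \le x_{max}^k$ directly from the entrywise bounds, and both use the identity $\rho\,x_u^k = \sum_{e\in E_{[u]}} x^e$ at an extremal vertex to get the spectral inequalities. Your write-up is merely more explicit about part (b) and the regular case, which the paper leaves as ``similarly''.
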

\begin{proof}
  For part (a) notice that for every $v \in V$ we have $x_{min} \leq x_v \leq
    x_{max}$, thus given an edge $e = \{v_1,\cdots, v_k\} \in E$, we have
    \[x^e = x_{v_1}\cdots x_{v_k} \geq  x_{min}\cdots
    x_{min} = x_{min}^k \quad \forall e \in E  \quad \Rightarrow \quad
    x^{min}\geq x_{min}^k.\]

    Let $u \in V$ such that $x_u = x_{min}$, so
\[\rho x_{min}^{k} = \sum_{e \in E_{[u]}}x^{e} \geq \sum_{e \in E_{[u]}}x^{min} \geq
\delta x^{min}.\]

Similarly we prove the part (b).

If $\h$ is regular then $x_{min} = x_{max}$ and $\delta = \rho = \Delta$,
therefore the equality holds.
\end{proof}

\begin{Teo}\label{Teominrhormax}
Let $ \h $ be a connected $k$-graph on $n$ vertices and $m$ edges, if $(\rho, \x)$ is its
principal eigenpair, then
		\[x^{min} \leq \frac{\rho(\h)}{km} \leq x^{max}\]	
	
	If $\h$ is regular then the equality holds.
\end{Teo}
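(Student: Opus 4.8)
The plan is to express the spectral radius $\rho$ as a weighted sum over the edges and then sandwich that sum between $m\,x^{min}$ and $m\,x^{max}$. The crucial identity I would establish first is $\rho = k\sum_{e\in E}x^e$. To obtain it, I would evaluate the scalar $\x^T\Ah\x$ in two ways. Since $(\rho,\x)$ is the principal eigenpair, the eigenvalue equation $\Ah\x = \rho\,\x^{[k-1]}$ gives $(\Ah\x)_i = \rho x_i^{k-1}$, and therefore $\x^T\Ah\x = \sum_{i\in V}x_i\cdot\rho x_i^{k-1} = \rho\sum_{i\in V}x_i^k = \rho\,\|\x\|_k^k = \rho$, where the last step uses the normalization $\|\x\|_k = 1$ built into the definition of the principal eigenvector. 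On the other hand, the edge-sum formula recorded in the preliminaries for uniform hypergraphs reads $\x^T\Ah\x = k\sum_{e\in E}x^e$. Comparing the two evaluations yields $\rho = k\sum_{e\in E}x^e$, which is the heart of the argument.

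With this identity in hand the bounds are immediate. By the definitions of $x^{min}$ and $x^{max}$ every edge satisfies $x^{min}\leq x^e\leq x^{max}$, so summing over the $m$ edges gives $m\,x^{min}\leq \sum_{e\in E}x^e\leq m\,x^{max}$. Multiplying through by $k$ and substituting $k\sum_{e\in E}x^e = \rho$ produces $km\,x^{min}\leq \rho\leq km\,x^{max}$, and dividing by $km$ delivers the claimed chain $x^{min}\leq \frac{\rho}{km}\leq x^{max}$. For the equality statement, I would invoke Theorem \ref{teo:reg}: an $r$-regular $k$-graph has $x_v = \frac{1}{\sqrt[k]{n}}$ for every $v$ and $\rho = r$, so $x^e = \frac1n$ for every edge and hence $x^{min} = x^{max} = \frac1n$; the handshake identity $\sum_{v\in V}d(v) = km$ together with regularity gives $nr = km$, whence $\frac{\rho}{km} = \frac{r}{km} = \frac1n$, matching both extremes.

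I do not anticipate a serious obstacle here; the argument is a short Rayleigh-quotient computation. The one point demanding care is the first step: one must use the normalization $\|\x\|_k = 1$ correctly so that $\x^T\Ah\x$ collapses to exactly $\rho$, and one must apply the \emph{uniform} specialization $\x^T\Ah\x = k\sum_{e\in E}x^e$ of formula \eqref{eq:xax}, which is legitimate precisely because $\h$ is assumed to be a $k$-graph. Everything after establishing $\rho = k\sum_{e\in E}x^e$ is routine estimation.
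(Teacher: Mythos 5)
Your proposal is correct and follows essentially the same route as the paper: the paper's proof rests on the same identity $\rho = k\sum_{e\in E}x^e$ (which it states directly, leaving implicit the two-way evaluation of $\x^T\Ah\x$ that you spell out) and then bounds the edge sum by $m\,x^{min}$ and $m\,x^{max}$ exactly as you do. Your treatment of the equality case via Theorem \ref{teo:reg} and the handshake identity likewise matches the paper's observation that regularity forces $x^{min} = \frac{1}{n} = x^{max}$ and $\rho = \frac{km}{n}$.
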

\begin{proof}

To prove the first inequality, just note that

\[\rho(\h) = k\sum_{e \in E}x^e \geq k\sum_{e \in E}x^{min} = kmx^{min} \]

The other inequality is analogous.

If $\h$ is regular then $x^{min} = \frac{1}{n} = x^{max}$ and $\rho = \frac{km}{n}$,
therefore the equality holds.
\end{proof}

One consequence of this theorem is that $\sqrt[k]{\frac{\rho}{km}} \leq x_{max}$, 	because $x^{max} \leq x_{max}^k$. It is also possible to obtain the following inequality  $x_{min}\leq \sqrt[k]{\frac{\rho}{km}}$,
but it is not very interesting because $x_{min}\leq\frac{1}{\sqrt[k]{n}}\leq \sqrt[k]{\frac{\rho}{km}}$.

\begin{Teo}
	Let $\h$ be a connected $k$-graph, if $(\rho, \x)$ is its  principal
eigenpair, then \[\sqrt[k]{\Gamma(\h)}\leq\gamma(\h)\leq
\sqrt[k]{\frac{\Delta}{\delta}\Gamma(\h)}.\]	
	If $\h$ is regular then both equalities hold.
\end{Teo}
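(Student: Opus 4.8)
The plan is to derive both inequalities directly from Theorem \ref{lemapeso}, which already sandwiches the edge-values $x^{min}$ and $x^{max}$ between the $k$-th powers of the extreme vertex entries. Writing $\gamma(\h)^k = x_{max}^k / x_{min}^k$ and $\Gamma(\h) = x^{max}/x^{min}$, the two claimed bounds are equivalent to the pair $\Gamma(\h) \le \gamma(\h)^k$ and $\gamma(\h)^k \le \frac{\Delta}{\delta}\Gamma(\h)$, so it suffices to establish these two and then take $k$-th roots.

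For the lower bound I would use the \emph{outer} inequalities of Theorem \ref{lemapeso}: part (b) gives $x^{max} \le x_{max}^k$ and part (a) gives $x_{min}^k \le x^{min}$. Feeding these into the quotient, so that the numerator only grows and the denominator only shrinks, yields
\[
\Gamma(\h) = \frac{x^{max}}{x^{min}} \le \frac{x_{max}^k}{x_{min}^k} = \gamma(\h)^k,
\]
and taking $k$-th roots gives $\sqrt[k]{\Gamma(\h)} \le \gamma(\h)$.

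For the upper bound I would instead invoke the \emph{inner} inequalities of the same theorem: part (b) gives $x_{max}^k \le \frac{\Delta}{\rho}x^{max}$ and part (a) gives $\frac{\delta}{\rho}x^{min} \le x_{min}^k$, the latter rearranging to $1/x_{min}^k \le \frac{\rho}{\delta}\cdot 1/x^{min}$. Multiplying the two, the factor $\rho$ cancels and one obtains
\[
\gamma(\h)^k = \frac{x_{max}^k}{x_{min}^k} \le \frac{\Delta}{\rho}x^{max}\cdot\frac{\rho}{\delta\, x^{min}} = \frac{\Delta}{\delta}\Gamma(\h),
\]
so taking $k$-th roots yields $\gamma(\h)\le \sqrt[k]{\frac{\Delta}{\delta}\Gamma(\h)}$.

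The only point requiring care is the orientation of each inequality when passing to the quotient: one must check that numerators are bounded above and denominators below (or vice versa) so that division preserves the intended direction. This is not a genuine analytic obstacle, since everything reduces to the monotonicity already bundled into Theorem \ref{lemapeso}. Finally, for the regular case I would note that regularity forces $x_{max}=x_{min}$ and $\Delta=\delta$, so that $\gamma(\h)=\Gamma(\h)=1$ and $\Delta/\delta = 1$; hence both sides of the double inequality collapse to $1$ and equality holds throughout.
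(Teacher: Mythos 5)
Your proposal is correct and follows essentially the same route as the paper: both bounds are obtained by combining the two pairs of inequalities from Theorem \ref{lemapeso} (the outer pair $x_{min}^k \leq x^{min}$, $x^{max}\leq x_{max}^k$ for the lower bound, the inner pair $\frac{\delta}{\rho}x^{min}\leq x_{min}^k$, $x_{max}^k\leq \frac{\Delta}{\rho}x^{max}$ for the upper bound, with $\rho$ cancelling), and the regular case is settled identically by noting $\gamma=\Gamma=1$ and $\Delta=\delta$. Your added remark about checking the orientation of each inequality when dividing is a sound precaution, but the argument is the same as the paper's.
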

\begin{proof} These inequalities follow from Theorem \ref{lemapeso}:
\[x^{max} \leq (x_{max})^k, \quad x^{min} \geq (x_{min})^k
\quad\Rightarrow\quad \Gamma(\h)\leq\gamma(\h)^k.\]

\[x_{max}^{k}\leq \frac{\Delta}{\rho} x^{max}, \quad x_{min}^{k} \geq \frac{\delta}{\rho} x^{min}\quad\Rightarrow\quad \gamma(\h)^k\leq \frac{\Delta}{\delta}\Gamma(\h)\]

We observed that if $\h$ is regular then $\gamma = 1$, $\Gamma = 1$  and
$\Delta = \delta$, therefore equality holds.
\end{proof}

Obviously if $\h$ is regular then $\gamma = 1\; \Rightarrow\; x_{max} =
x_{min} \; \Rightarrow\; x^{max} = x^{min} \; \Rightarrow\; \Gamma =1$. But
it is not true that if $\Gamma = 1$ then $\h$ is regular. Thus the following
questions naturally arise. For which hypergraphs $\h$, we have $\Gamma(\h) =
1 $? What is the meaning of the parameter $\Gamma$? We know that $\gamma$
measures the regularity of vertices of the hypergraph, would $\Gamma$ have a
similar meaning for the edges?

We believe that these questions are answered by Theorem \ref{teo:caracG}
which will be proved below, which we state here again for easy reference.

\vspace{0.4cm}\noindent\textbf{Theorema \ref{teo:caracG}.}	\textit{Let $\h$ a $k$-graph. $\Gamma(\h) = 1$, if and only if, for each edge the product of the degrees of its vertices is constant.}

\begin{proof} If $\Gamma(\h) = 1$ then $x^{min} = x^{max}$, so by Theorem
\ref{Teominrhormax} we have $x^e = \frac{\rho}{km}$, for all $e\in E$. For
each $u \in V$ we have that\[\rho x_u^k = \sum_{e \in E_{[u]}}x^e =
d(u)\frac{\rho}{km}\quad \Rightarrow\quad x_u = \sqrt[k]{\frac{d(u)}{km}}.\]
Therefore \[\frac{\rho}{km} = x^e = \frac{\sqrt[k]{d(v_1)\cdots d(v_k)}}{km}
\quad \Rightarrow\quad \rho^k = d(v_1)\cdots d(v_k)\quad \forall\;
e=\{v_1,\ldots,v_k\} \in E.\]

That is, for any edge, the product of the degrees of its vertices is always $\rho^k$.

\vspace{0.2cm}

Conversely, if $d(v_1)\cdots d(v_k) = D$ for all
$e=\{v_1,\ldots,v_k\} \in E$, we define the vector $\x$ by $x_u =
\sqrt[k]{\frac{d(u)}{km}}$ for all $u \in V$, and then
\[\sum_{e \in E_{[u]}}x^{e}
= \sum_{e \in E_{[u]}} \frac{\sqrt[k]{d(v_1)\cdots d(v_k)}}{km} = \sum_{e \in
E_{[u]}} \frac{\sqrt[k]{D}}{km} =  d(u)\frac{\sqrt[k]{D}}{km} = \sqrt[k]{D} x_u^k.\]

Hence $(\Ah\x)_u = \sqrt[k]{D} x_u^{k-1}$ for all $u \in V$. That is,
$(\sqrt[k]{D},\x)$ is an eigenpair of $\h$ and by Theorem \ref{Perron} we
know that $\x$ is the principal eigenvector of $\h$, so $x^{max} =
\frac{\sqrt[k]{D}}{km} = x^{min}$ and therefore $\Gamma(\h) = 1$.
\end{proof}

Under these conditions, it is reasonable to say that the parameter
$\Gamma$ measures the balance of the  distribution of vertex degrees at the
edges in the hypergraph, since if $\Gamma$ is greater than 1 then the product
of the vertices of each edge is not constant.

\begin{Def}
Let $\h=(V,E)$ be an $k$-graph, let $s \geq 1$ and $r \geq ks$ two integers.
We define a generalized power hypergraph of $\h$ as the $r$-graph $\h^r_s$,
obtained by replacing each vertex $v_i \in V(\h)$ by a set with $s$ vertices
$\varsigma_{v}=\{v_{1}, \ldots, v_{s}\}$ and adding a new set with $r-k$ vertices of
degree one $\varsigma_e=\{v^1_e, \ldots,v^{r-k}_e\}$ on each edge $e \in E(\h)$. More
precisely, the sets of vertices and edges of $\h^r_s$ are
$$V(\h^r_s)=\left( \bigcup_{v\in V} \varsigma_v\right) \cup \left( \bigcup_{e\in E} \varsigma_e\right)\;\; \emph{and}\;\; E(\h^r_s)=\{\varsigma_e\cup \varsigma_{v_1} \cup \cdots \cup \varsigma_{v_k} \colon e=\{v_1,\ldots, v_k\} \in E\}$$
	
\end{Def}

\begin{Exe}
Two examples of families of uniform hypergraphs that verify the conditions of
the Theorem \ref{teo:caracG} are
\begin{itemize}
	\item[(a)] The generalized power hypergraph $\h^r_s$ where $\h$ is a regular
	$k$-uniform hypergraph and $r \geq ks$.
	\item[(b)] The generalized power hypergraph $(S_n)^r_s$  where $S_n$ is the graph
	star with $n$ vertices and $r \geq 2s$.
\end{itemize}	
	
\end{Exe}

\section*{Acknowledgments}
This work is part of doctoral studies of K. Cardoso under the supervision of
V. Trevisan. K. Cardoso is grateful for the support given by Intituto Federal
do Rio Grande do Sul (IFRS), Campus Feliz. V. Trevisan acknowledges partial
support of CNPq grants 409746/2016-9 and 303334/2016-9, CAPES (Proj.
MATHAMSUD 18-MATH-01) and FAPERGS (Proj.\ PqG 17/2551-0001).


\end{document}